\documentclass[1p,11pt]{article}

\usepackage{lineno,hyperref}
\usepackage{amsmath}
\usepackage{amsthm}
\usepackage{authblk}
\usepackage{graphicx,amssymb}
\usepackage{amssymb,tikz}
\usepackage{booktabs}
\usepackage{multirow}
\usepackage[normalem]{ulem}

\modulolinenumbers[5]
\newtheorem{theorem}{Theorem}
\newtheorem{exmp}{Example}

\newtheorem*{remark}{Remarks.}

\usepackage{algorithm}


\title{ Cauchy-type identities through collocation matrices}

\author[1]{P. D\'iaz\thanks{pablodiaz@unizar.es}}

\author[2]{E. Mainar\thanks{esmemain@unizar.es}}


\affil[1,2]{\textit{\small{Departamento de Matem\'{a}tica Aplicada/IUMA, Universidad de Zaragoza}}}
\date{}
\begin{document}
\maketitle
\begin{abstract}
 We present a broader framework for the Cauchy identity derived from the determinant expansion of collocation matrices. This approach yields an infinite family of identities,    where the original Cauchy identity stands as a particular case. To illustrate the versatility and depth of this approach, we provide a range of compelling examples, showcasing the connections and applications of these novel identities.
\end{abstract}

{\bf Keywords:} 
 Cauchy identity, Collocation matrices, Schur functions  \\ 


\section{Introduction} \label{Sec:Intro}

The celebrated Cauchy identity is expressed as:
\[
\prod_{i,j} \frac{1}{1 - x_i y_j} = \sum_{\lambda} s_\lambda(x) s_\lambda(y),
\]
where \( s_\lambda(x) \) and \( s_\lambda(y) \)   denote Schur functions indexed by  the partition \( \lambda \), and \( x = (x_1, x_2, \ldots) \) and \( y = (y_1, y_2, \ldots) \) are sets of variables.
This identity reveals deep connections between seemingly disparate entities, such as determinants, symmetric polynomials, and partition functions. Rigorous algebraic proofs
of the identity can be found in classical references such as \cite{Mac} and \cite{Stanley}.

Among its many interpretations, one of the most illuminating stems from representation theory. 
The Cauchy identity can be understood as a result on the characters of the General Linear group (see, for instance, \cite{Bump}, chapter 38), which makes it a fundamental piece in algebraic combinatorics.

The Cauchy identity has inspired numerous generalizations. Extensions exist for characters of Orthogonal and Symplectic groups \cite{ortho}, double-Schur polynomials \cite{double}, Hall-Littlewood polynomials \cite{Hall}, and
$q$-deformations \cite{quantum}, among others. Each of these adaptations reveals new layers of insight into the interplay between algebraic structures.

In this paper, we propose a unified framework that encompasses the Cauchy identity and its variations. 
By analyzing the determinant expansions of a family of collocation matrices, we derive an infinite class of identities, with the classical Cauchy identity emerging as a special case.

\section{Collocation matrices and Cauchy-type identities}
\begin{theorem}\label{Theorem}
    Let $g\in C^\infty$, and call $R$ the radius of convergence of the McLaurin series of $g$. Consider the $n$-tuplas $X=(x_1,\dots, x_n)$ and  $A=(a_1,\dots, a_n)$, such that $x_i\ne x_j$ and $a_i\ne a_j$ for $i\ne j$.  Let \( G = (g_1, \dots, g_n) \), where $g_k(x):=g(a_k x)$, defined on $x\in (-R/a_k,R/a_k)$. Let us call $M_G(X)$ the collocation matrix of the system $G$ on the $n$-tupla $X$,
    \[
   M_{G}(X) := \big( g(a_jx_{i}) \big)_{1 \le i, j \le n}.
   \]
For  $x_k\in(-R/a_{max},R/a_{max})$, where $a_{max}=\max\{|a_1|,\dots,|a_n|\}$, we have
\begin{equation}\label{Cor}
 \sum_{\lambda}\frac{G_\lambda}{C_\lambda}s_{\lambda}(a_1,\dots,a_n)s_{\lambda}(x_{1},\dots,x_n)=\frac{\det M_G(X)}{|V_{x_{1},\dots,x_n}||V_{a_{1},\dots,a_n}|} , 
\end{equation}
where,   for any partition $\lambda=(\lambda_1,  \ldots, \lambda_n)$, $s_{\lambda}(x_{1},\dots,x_n)$ is the corresponding Schur polynomial, and
\[
    G_\lambda:=\prod_{l=1}^n g^{(\lambda_l+n-l)}(0), \qquad  C_\lambda:=\prod_{l=1}^n(\lambda_l+n-l)!.
\]
\end{theorem}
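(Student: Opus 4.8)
The plan is to expand the determinant $\det M_G(X)$ combinatorially.The plan is to expand each entry of the collocation matrix into its Maclaurin series and recognize the determinant as a sum of products of generalized Vandermonde minors. Writing $g(t)=\sum_{m\ge 0} c_m t^m$ with $c_m = g^{(m)}(0)/m!$, the $(i,j)$ entry becomes $g(a_j x_i)=\sum_{m\ge 0} c_m\, a_j^m x_i^m$. This exhibits $M_G(X)$ as a product $M_G(X)=P\,Q$ of an $n\times\infty$ matrix $P=(x_i^m)_{1\le i\le n,\,m\ge 0}$ and an $\infty\times n$ matrix $Q=(c_m a_j^m)_{m\ge 0,\,1\le j\le n}$. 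I would then apply the Cauchy--Binet formula to this factorization, so that $\det M_G(X)=\sum_S \det(P_S)\det(Q_S)$, where $S=\{m_1<m_2<\dots<m_n\}$ runs over all $n$-element subsets of $\{0,1,2,\dots\}$, and $P_S$, $Q_S$ are the corresponding $n\times n$ submatrices.

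Next I would evaluate the two families of minors. The matrix $P_S=(x_i^{m_k})$ is a generalized Vandermonde, and after reversing its columns to put the exponents in decreasing order (a permutation of sign $(-1)^{n(n-1)/2}$) it matches the numerator of the bialternant formula for a Schur polynomial. Under the standard bijection $S\leftrightarrow\lambda$ given by $m_{n+1-l}=\lambda_l+n-l$ between strictly increasing $n$-subsets and partitions with at most $n$ parts, this yields $\det(P_S)=(-1)^{n(n-1)/2}\,s_\lambda(x_1,\dots,x_n)\,|V_{x_1,\dots,x_n}|$. Pulling out the diagonal factor $\mathrm{diag}(c_{m_1},\dots,c_{m_n})$ from $Q_S$ and transposing gives, by the same bialternant identity, $\det(Q_S)=\big(\prod_{k} c_{m_k}\big)(-1)^{n(n-1)/2}\,s_\lambda(a_1,\dots,a_n)\,|V_{a_1,\dots,a_n}|$. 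The two sign factors multiply to $1$, and the scalar $\prod_{k=1}^n c_{m_k}=\prod_{l=1}^n c_{\lambda_l+n-l}=\prod_{l=1}^n g^{(\lambda_l+n-l)}(0)/(\lambda_l+n-l)!=G_\lambda/C_\lambda$. Summing over $S$ (equivalently over $\lambda$) and dividing through by $|V_{x_1,\dots,x_n}||V_{a_1,\dots,a_n}|$ then produces exactly \eqref{Cor}.

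The algebraic core above is essentially bookkeeping; the step I expect to be the genuine obstacle is the analytic justification of the infinite Cauchy--Binet expansion, i.e.\ that the determinant of the product of the two infinite matrices equals the (infinite) sum of products of minors and that term-by-term expansion and rearrangement are legitimate. This is precisely where the hypotheses on $R$ and $a_{max}$ enter: the condition $x_k\in(-R/a_{max},R/a_{max})$ guarantees $|a_j x_i|<R$ for all $i,j$, so every entry $g(a_j x_i)=\sum_m c_m(a_jx_i)^m$ converges absolutely. I would use this absolute convergence to expand the determinant (a fixed polynomial in the $n^2$ entries) by multilinearity, collect terms by the multiset of exponents appearing in each column, discard the summands in which two columns share an exponent (their Vandermonde minor vanishes), and reorganize the absolutely convergent multiple series into the sum over subsets $S$. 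Establishing this rearrangement rigorously, rather than the identification of the individual minors as Schur polynomials, is the delicate part of the argument.
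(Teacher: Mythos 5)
Your argument is correct and reaches the identity with the right coefficients: the factorization $M_G(X)=PQ$ with $P=(x_i^m)$ and $Q=(c_m a_j^m)$, the infinite Cauchy--Binet expansion over $n$-subsets $S$ of exponents, the bijection $m_{n+1-l}=\lambda_l+n-l$, the cancellation of the two $(-1)^{n(n-1)/2}$ column-reversal signs, and the identification $\prod_k c_{m_k}=G_\lambda/C_\lambda$ are all right. The route differs from the paper's only in packaging: the paper does not factor the matrix or cite Cauchy--Binet, but instead expands $\det\big(\sum_k \tfrac{g^{(k)}(0)}{k!}(a_jx_i)^k\big)$ directly by multilinearity in the columns, discards tuples of exponents with repetitions, splits the remaining sum into strictly decreasing sequences times permutations, and reassembles the permutation sum into the minor $\det(a_i^{k_j})$ before substituting $k_l=\lambda_l+n-l$ and applying the bialternant formula. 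Those four steps are precisely a hands-on proof of the Cauchy--Binet identity for this $n\times\infty$ times $\infty\times n$ product, so the two proofs are the same computation; yours is more compact and makes the $a\leftrightarrow x$ symmetry visible from the outset, while the paper's keeps every rearrangement explicit. You are also right that the genuine issue is the analytic legitimacy of the infinite expansion and rearrangement under the hypothesis $|a_jx_i|<R$; the paper itself treats this lightly (its step (1) appeals only to ``usual properties of determinants''), so flagging absolute convergence as the justification for reorganizing the multiple series is, if anything, more careful than the published argument.
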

\begin{proof}
For $x_k\in(-R/a_{max},R/a_{max})$ we can identify   $g(x)$ with its McLaurin series. Thus,
\begin{eqnarray*}
    &&\det M_G(X)=\det\big(g(a_jx_i)\big)=\det\Big(\sum_{k=0}^\infty \frac{g^{(k)}(0)}{k!}(a_jx_i)^k\Big)\nonumber \\ &\stackrel{^{(1)}}{=}&\sum_{k_1,\dots,k_n=0}^\infty \det\Big(\frac{g^{(k_j)}(0)}{k_j!}(a_jx_i)^{k_j}\Big)
    =\sum_{k_1,\dots,k_n=0}^\infty \bigg[\prod_{l=1}^n \frac{g^{(k_l)}(0)}{k_l!}a_l^{k_l}\bigg]\det\Big(x_i^{k_j}\Big)\nonumber \\
&\stackrel{^{(2)}}{=}&\sum_{k_1>\cdots> k_n=0}^\infty \sum_{\sigma\in S_n} \bigg[\prod_{l=1}^n \frac{g^{(k_{\sigma(l)})}(0)}{k_{\sigma(l)}!}a_l^{k_{\sigma(l)}}\bigg]\det\Big(x_i^{k_{\sigma(j)}}\Big)\nonumber\\
    &\stackrel{^{(3)}}{=}&\sum_{k_1>\cdots> k_n=0}^\infty \sum_{\sigma\in S_n} \bigg[\prod_{l=1}^n \frac{g^{(k_l)}(0)}{k_l!}a_l^{k_{\sigma(l)}}\bigg]\det\Big(x_i^{k_j}\Big)\text{sgn}(\sigma)\nonumber\\
    &\stackrel{^{(4)}}{=}&\sum_{k_1>\cdots> k_n=0}^\infty  \bigg[\prod_{l=1}^n \frac{g^{(k_l)}(0)}{k_l!}\bigg]\det\Big(a_i^{k_j}\Big)\det\Big(x_i^{k_j}\Big)\nonumber \\
    &\stackrel{^{(5)}}{=}&\sum_{\substack{ \lambda\\
    l(\lambda)\leq n}}  \bigg[\prod_{l=1}^n \frac{g^{(\lambda_l+n-l)}(0)}{(\lambda_l+n-l)!}\bigg]\det\Big(a_i^{\lambda_j+n-j}\Big)\det\Big(x_i^{\lambda_j+n-j}\Big)\nonumber \\
    &\stackrel{^{(6)}}{=}&|V_{x_{1},\dots,x_n}||V_{a_{1},\dots,a_n}|\sum_{\lambda}\frac{G_\lambda}{C_\lambda}s_{\lambda}(a_{1},\dots,a_n)s_{\lambda}(x_{1},\dots,x_n),
\end{eqnarray*}
where, on the indicated equalities, we have used the following facts.
\begin{enumerate}
    \item[(1)] 
    We used the letters 
 $k_1,\dots,k_n$ 
  to represent the non-negative integers specifying the degree of the terms in the McLaurin expansion for each of the $n$ columns. The sum on the variables $k_1,\dots,k_n$ of the determinants follows from the application of usual properties of determinants.

    \item[(2)] 
   Considering \(\det\Big(x_i^{k_j}\Big)\), it becomes evident that any term in the sum where \(k_l = k_m\) for \(m, l = 1, \dots, n\) with \(l \neq m\) makes no contribution. Therefore, the summation over \(n\)-tuples \((k_1, \dots, k_n)\) reduces to a summation over tuples with distinct entries. Consequently, we can decompose the sum into two parts: the sum over ordered sequences \(k_1 > \cdots > k_n\) and the sum over all permutations of the elements of each sequence, expressed as \((k_{\sigma(1)}, k_{\sigma(2)}, \dots, k_{\sigma(n)})\), where  \(\sigma \) belongs to  $S_n$, the permutation group of $n$ elements.

    \item[(3)] Be aware that, for $\sigma\in S_n$, we have 
    \begin{equation*}
      \prod_{l=1}^n \frac{g^{(k_{\sigma(l)})}(0)}{k_{\sigma(l)}!}= \prod_{l=1}^n \frac{g^{(k_l)}(0)}{k_l!}\quad \text{ and }\quad \det\Big(x_i^{k_{\sigma(j)}}\Big)=\det\Big(x_i^{k_j}\Big)\text{sgn}(\sigma),
    \end{equation*}
    where $sgn(\sigma)$ denotes  the signature of the permutation $\sigma$, taking the value $+1$ if  $\sigma$ is even and $-1$ if  $\sigma$ is odd.
    \item[(4)] Note that
    \begin{equation*}
        \sum_{\sigma\in S_n} \prod_{l=1}^n a_l^{k_{\sigma(l)}}\text{sgn}(\sigma)=\det\Big(a_i^{k_j}\Big).
    \end{equation*}
    \item[(5)] 
After performing the substitution  
\begin{equation*}
    k_l = \lambda_l + n - l, \quad l = 1, \dots, n,
\end{equation*}  
each sequence \(k_1 > \cdots > k_n\) is transformed into a partition with at most \(n\) parts, \((\lambda_1, \dots, \lambda_n)\). It is straightforward to observe that summing over all sequences \(k_1 > \cdots > k_n\) is equivalent to summing over all partitions \(\lambda\) with \(l(\lambda) \leq n\).  

\item[(6)] We have applied Jacobi's bialternant formula:  
\begin{equation*}
    s_\lambda(x_1, \dots, x_n) =  \frac{\det\Big(x_i^{\lambda_j + n - j}\Big)} {|V_{x_1, \dots, x_n}|}.
\end{equation*}  
 Let us observe that the restriction \(l(\lambda) \leq n\) is unnecessary in the summation, since Schur functions are zero whenever \(l(\lambda) > n\).  

\end{enumerate}
\end{proof}

\begin{remark}

As a first consistency check, observe that any permutation of the variables \((a_{\sigma(1)}, \dots, a_{\sigma(n)})\) or \((x_{\sigma(1)}, \dots, x_{\sigma(n)})\) leaves both sides of \eqref{Cor} invariant.  Moreover, note that the evident symmetry under the exchange \((a_1, \dots, a_n) \leftrightarrow (x_1, \dots, x_n)\) on the LHS of \eqref{Cor} is mirrored on the RHS of \eqref{Cor}. This is because, under such exchange, the collocation matrix \(M_G(X)\) is transposed, leaving its determinant unchanged.

Finally, observe that a function \(g\) uniquely determines both the left-hand side of \eqref{Cor}, through the coefficients \(G_\lambda\), and the right-hand side, via the collocation matrix \(M_G(X)\). Consequently, each analytic function \(g\) gives rise to a distinct Cauchy-type identity. This result highlights the versatility of Theorem \ref{Theorem}, which will be further demonstrated in the   following  illustrative examples.

\end{remark}

\begin{exmp}
    For a single variable $x_1$, and $a_1=1$, equation \eqref{Cor} is just the McLaurin expansion of $g(x)$. Notice that for this case, the partitions with one part, $l(\lambda)=1$, are the only ones which contribute to the sum. Now, for $|\lambda|=m$, we have
    \begin{equation*}
        C_\lambda=m!,\quad G_\lambda=g^{(m)}(0), \quad \det M_G(X)=g(x_1),
    \end{equation*}
    and the McLaurin series of $g$ is recovered.
\end{exmp}
\begin{exmp}
     For  $g(x)=\frac{1}{1-x}$,
    we have $G_\lambda=C_\lambda$, and 
    \[
        M_G(X)=\left(\frac{1}{1-a_jx_i}\right)_{1\leq i,j\leq n}.
    \]
    Applying \eqref{Cor}, we recover the Cauchy identity
    \begin{equation} \label{Cid}
      \sum_\lambda s_\lambda(a_1,\dots,a_n)s_\lambda(x_1,\dots,x_n) = \frac{\det \left(\frac{1}{1-a_jx_i}\right)  }{|V_{x_{1},\dots,x_n}||V_{a_{1},\dots,a_n}|} =\prod_{i,j}\frac{1}{1-a_jx_i},  
    \end{equation}
    where  \eqref{Cid} follows from an equivalent variant of the classic Cauchy determinant identity
    \begin{equation*}
        \det\bigg(\frac{1}{x_i-y_j}\bigg)=\frac{|V_{x_1,\dots,x_n}||V_{y_1,\dots,y_n}|}{\prod_{i,j}(x_i-y_j)}.
    \end{equation*}
\end{exmp}

\begin{exmp}
    If $g(x)$ is a polynomial of degree $m\geq n$, that is, 
    \(
        g(x)=b_0+b_1x+\cdots+b_mx^m,
    \)
   then  \begin{equation*}
        M_G(X)= (P_j(x_i))_{1\leq i,j\leq n}, 
         \end{equation*}
         with
         \begin{equation*} P_j(x)=b_0+b_1a_jx+\cdots+b_ma_j^mx^m.
    \end{equation*}
  We obtain a bounded sum since 
\begin{equation*}
  G_\lambda= \left\{ \begin{array}{lcc} C_\lambda B_\lambda, & \text{ if } & \lambda_1\leq m-n+1, \\ 0, & \text{  if }& \lambda_1> m+1-n,  \end{array} \right. 
\end{equation*}
    with $B_\lambda=\prod_{l=1}^{n}b_{\lambda_l+n-l}.$
Thus,
\begin{equation*}
    \sum_{\substack{\lambda\\ \lambda_1\le m-n+1}} B_\lambda s_\lambda(a_1,\dots,a_n)s_\lambda(x_1,\dots,x_n)=\frac{  \det  (P_j(x_i)) }{|V_{x_{1},\dots,x_n}||V_{a_{1},\dots,a_n}|}. 
\end{equation*}
Note that if $m<n$, we have $G_\lambda=0$, but also $\det(P_j(x_i))=0$, since the polynomials $(P_1,\dots,P_n)$ are not linearly independent. Therefore, the identity holds trivially. 

In the particular case where $b_1=\dots =b_m=1$, for which $B_\lambda=1$, we obtain bounded sums that correspond to finite truncations of the Cauchy identity \eqref{Cid}.
 
     As another particular polynomial case, let us consider the sequence of non-negative integers $\mu_1>\dots>\mu_n$, and
    \(
    g(x)=x^{\mu_1}+x^{\mu_2}+\cdots+x^{\mu_n}.
    \)
In this case we have
\begin{equation}
  G_\lambda= \left\{ \begin{array}{ll} C_\lambda, & \text{ if   } \,\,\, \lambda=(\mu_1-n+1,\,\mu_2-n+2,\dots,\,\mu_n), \\ 0,  &\text{  otherwise. }  \end{array}\right. 
\end{equation}
So, we obtain
    \begin{equation}\label{onepart}
s_{(\mu_1-n+1,\dots,\,\mu_n)}(a_1,\dots,a_n)s_{(\mu_1-n+1,\dots,\,\mu_n)}(x_1,\dots,x_n) =\frac{\det  (P_j(x_i)) }{|V_{x_{1},\dots,x_n}||V_{a_{1},\dots,a_n}|} . 
    \end{equation}
The identity \eqref{onepart} can be directly checked by applying the Jacobi’s bialternant formula and basic properties of determinants. Namely, $\det(A B)=\det(A)\det(B)$, and $\det(A^t)=\det(A)$. 
\end{exmp}
Let us call $P$ the set of integer partitions. We define
\begin{eqnarray*}
&& P_{n,\,even} := \{\lambda \in P  \mid  \lambda_l+n-l \text{ is even for }   l=1,\dots,n \}, \\
&& P_{n,\,odd} := \{\lambda \in P\mid  \lambda_l+n-l \text{ is odd for }  l=1,\dots,n \}.
 \end{eqnarray*}
These subsets of partitions are relevant since for even (odd) functions $g(x)$ only the partitions belonging to $P_{n,\,even}$ ($P_{n,\,odd}$) will contribute to the sum in \eqref{Cor}. \\
\begin{exmp}
     For   
    \(
g(x)=\frac{1}{1-x^2},
    \)
    we have 
    \begin{equation*}
           M_G(X)=\bigg(\frac{1}{1-a^2_jx^2_i}\bigg)_{1\leq i,j\leq n}, 
               \end{equation*}
               and
         \begin{equation*}
                 G_\lambda= \left\{ \begin{array}{ll} C_\lambda,  & \text{if }  \lambda \in P_{n,\,even}, \\ 0, & \text{otherwise}.  \end{array} \right.  
    \end{equation*}
    Applying \eqref{Cor}, we obtain
    \begin{equation}\label{minusx2}
      \sum_{\lambda \in P_{n,\,even}}  s_\lambda(a_1,\dots,a_n)s_\lambda(x_1,\dots,x_n)=\prod_{i<j}(a_i+a_j)(x_i+x_j)\prod_{i,j}\frac{1}{1-a^2_ix^2_j},
    \end{equation}
    where the last equality follows from the Cauchy determinant identity, and the fact that
    \begin{equation}\label{Van2}     |V_{x^2_{1},\dots,x^2_n}|=|V_{x_{1},\dots,x_n}|\prod_{i<j}(x_i+x_j). 
    \end{equation}
\end{exmp}

\begin{exmp}
     For   
    \(g(x)=\frac{1}{1+x^2}\), 
    we have 
    \begin{equation*}
   M_G(X)=\bigg(\frac{1}{1+a^2_jx^2_i}\bigg)_{1\leq i,j\leq n}, 
     \end{equation*}
   and 
     \begin{equation*}
     G_\lambda= \left\{ \begin{array}{ll} (-1)^{\frac{2|\lambda|-n(n-1)}{4}}C_\lambda,  & \text{ if }  \lambda \in P_{n,\,even}, \\ 0, & \text{ otherwise.}  \end{array} \right.
    \end{equation*}
    Applying \eqref{Cor}, and taking into account \eqref{Van2}, we obtain
    \begin{equation}\label{plusx2}
      \sum_{\lambda \in P_{n,\,even}}  (-1)^{\frac{2|\lambda|-n(n-1)}{4}}s_\lambda(a_1,\dots,a_n)s_\lambda(x_1,\dots,x_n)  =\frac{\prod_{i<j}(a_i+a_j)(x_i+x_j)}{\prod_{i,j}(1+a^2_ix^2_j)}.
    \end{equation}
   Note that \eqref{plusx2} can be obtained directly from \eqref{minusx2} by the change $a_k\longrightarrow ia_k$, $k=1,\dots, n$. 
\end{exmp}

\begin{exmp}
     For   
    \(g(x)=e^x\), 
we have 
   \begin{equation*}
      M_G(X)= ( e^{a_jx_i} )_{1\leq i,j\leq n}, 
\end{equation*}
and  $G_\lambda=1$ for all partitions. So, we can write  
   \begin{equation*}    
   \sum_\lambda \frac{1}{C_\lambda} s_\lambda(a_1,\dots,a_n)s_\lambda(x_1,\dots,x_n)=\frac{\det ( e^{a_jx_i} )}{|V_{x_{1},\dots,x_n}||V_{a_{1},\dots,a_n}|}. 
\end{equation*}
Furthermore,  if we consider  
    \(g(x)=\sinh(x)\), 
we have
  \begin{equation*}
   M_G(X)=( \sinh(a_jx_i))_{1\leq i,j\leq n},
   \end{equation*}
with
  \begin{equation*}
  G_\lambda= \left\{ \begin{array}{cc} 1,  & \text{ if }  \lambda \in P_{n,\,odd}, \\ 0, & \text{otherwise.}  \end{array} \right.
   \end{equation*}
Then, we have:
   \begin{equation*}
  \sum_{\lambda\in P_{n,\,odd }}\frac{1}{C_\lambda} s_\lambda(a_1,\dots,a_n)s_\lambda(x_1,\dots,x_n)=\frac{ \det( \sinh(a_jx_i)) }{|V_{x_{1},\dots,x_n}||V_{a_{1},\dots,a_n}|}. 
\end{equation*}

\end{exmp} 

\begin{exmp}
     For  
    \(g(x)=\sin(x)\),
we have 
 \begin{equation*}
 M_G(X)=( \sin(a_jx_i))_{1\leq i,j\leq n},
 \end{equation*}
 with
 \begin{equation*}  G_\lambda= \left\{ \begin{array}{cc} (-1)^{\frac{2|\lambda|+n(n+1)}{4}},  & \text{ if }  \lambda \in P_{n,\,odd}, \\ 0, & \text{otherwise,}  \end{array} \right. 
  \end{equation*}
  and the following Cauchy-type identity is derived
    \begin{equation*}
 \sum_{\lambda\in P_{n,\,odd }}\frac{ (-1)^{\frac{2|\lambda|+n(n+1)}{4}}}{C_\lambda} s_\lambda(a_1,\dots,a_n)s_\lambda(x_1,\dots,x_n)\nonumber =\frac{\det( \sin(a_jx_i))}{|V_{x_{1},\dots,x_n}||V_{a_{1},\dots,a_n}|} . 
\end{equation*}

\end{exmp}

\begin{exmp}
For  
\(g(x)=\ln(1-x)\),
we have  
 \[
      M_G(X)=\big(\ln(1-a_jx_i)\big)_{1\leq i,j\leq n} \\
\] 
  with $G_\lambda=C_\lambda/P_\lambda$ and $P_\lambda=\prod_{i=1}^{l(\lambda)}\lambda_i$.  The following identity is obtained:
\[  \sum_\lambda \frac{1}{P_\lambda}s_\lambda(a_1,\dots,a_n)s_\lambda(x_1,\dots,x_n)=\frac{\det (\ln(1-a_jx_i))}{|V_{x_{1},\dots,x_n}||V_{a_{1},\dots,a_n}|}\].

\end{exmp}

\section*{Acknowledgement}
This work was partially supported through the Spanish research grants PID2022-138569NB-I00 and RED2022-134176-T (MCI/AEI) and by Gobierno de Arag\'{o}n (E41$\_$23R, S60$\_$23R).


%



\end{document}